\documentclass[a4paper,12pt]{amsart}
\usepackage{amsmath,amsthm,amssymb}
\usepackage{hyperref}

\allowdisplaybreaks[1]

\textwidth=460pt \evensidemargin=3pt \oddsidemargin=3pt
\marginparsep=8pt \marginparpush=8pt

\newcommand{\ti}{\widetilde}

\newcommand{\ity}{\infty}

\newcommand{\C}{\mathbb{C}}

\newcommand{\N}{\mathbb{N}}

\newcommand{\be}{\beta}
\newcommand{\al}{\alpha}
\newcommand{\ze}{\zeta}

\numberwithin{equation}{section}

\newtheorem{theorem}{Theorem}[section]
\newtheorem{lemma}[theorem]{Lemma}
\newtheorem{corollary}[theorem]{Corollary}
\theoremstyle{remark}

\thanks {The research work of the   author is supported by research fellowship from Council of Scientific and Industrial Research (CSIR), New Delhi.}

\begin{document}
\title[dynamics and semiconjugation]{On dynamics of semiconjugated entire functions}
\author[D. Kumar]{Dinesh Kumar}
\address{Department of Mathematics, University of Delhi,
Delhi--110 007, India}

\email{dinukumar680@gmail.com }
%
%

\begin{abstract}
Let $g$ and $h$ be transcendental entire functions and let $f$ be a continuous map of the complex plane into itself with $f\circ g=h\circ f.$ Then $g$ and $h$ are said to be semiconjugated by $f$ and $f$ is called a semiconjugacy.
We consider the dynamical properties of semiconjugated transcendental entire functions $g$ and $h$ and provide several conditions under which the semiconjugacy $f$ carries Fatou set of one entire function into the Fatou set of other entire function appearing in the semiconjugation. We have also shown that under certain condition on the growth  of  entire functions appearing in the semiconjugation, the set of  asymptotic values of the derivative of composition of the entire functions is bounded.
\end{abstract}
\keywords{Semiconjugation, normal family, wandering domain, bounded type, permutable, asymptotic value}

\subjclass[2010]{30D35, 37F10, 30D05}

\maketitle

\section{Introduction}\label{sec1}

 Let $f$ be a transcendental entire function. For $n\in\N$ let $f^n$ denote the nth iterate of $f.$ The set $F(f)=\{z\in\C : \{f^n\}_{n\in\N}\,\text{ is normal in some neighborhood of}\, z\}$ is called the Fatou set of $f$ or the set of normality of $f$ and its complement $J(f)$ is the Julia set of $f$.  The Fatou set is open and completely invariant: $z\in F(f)$ if and only if $f(z)\in F(f)$ and consequently $J(f)$ is completely invariant. The Julia set of a transcendental entire function is non empty, closed perfect set and unbounded. All these results and more can be found in Bergweiler \cite{berg1}. If $U$ is a component of Fatou set $F(f),$ then $f(U)$ lies in some component $V$ of $F(f)$ and  $V\setminus f(U)$ is a set which contains atmost one point, \cite{berg4}. This result was also proved in \cite{MH} independently.
A component $U$ of  Fatou set of $f$ is called a wandering domain if $U_k\cap U_l=\emptyset$ for $k\neq l,$ where $U_k$ denotes the component of $F(f)$ containing $f^k(U).$ Sullivan \cite{Sullivan} proved that  Fatou set of any rational function has no wandering domains. Transcendental entire functions may however have wandering domains, but certain classes of functions which do not have wandering domains are known, \cite{baker3, baker7, berg1, berg2, el2, keen, stallard}. 
 A component $U\subset F(f)$ is called  preperiodic if $f^k(U_l)=U_l$ for some $k,l\geq 0.$ If $f^k(U)=U,$ for some $k\in\N,$ then $U$ is called a periodic component of $F(f)$ and there are then four possibilities \cite{baker4}, as the possibility of a Herman ring is ruled out for a transcendental entire function, \cite[p.\ 65]{Hua}. In particular, $U$ is called a Baker domain if $f^{nk}(z)\to\ity$ as $n\to\ity$ for $z\in U$ and, moreover, $U$ is simply connected \cite[Theorem 3.1]{baker3}.
 For more details on the subject we refer the reader to \cite{ beardon,berg1,Hua}.

Now let $g$ and $h$ be entire functions and let $f:\C\to\C$ be a continuous function such that
\[f\circ g=h\circ f.\]
Then we say that $g$ and $h$ are semiconjugated (by $f$) and $f$ is called a semiconjugacy. In particular if $g=h,$ that is, $f\circ g=g\circ f$ and $f$ is entire, then $f$ and $g$ are said to be permutable. There have been several results on the dynamics of permutable entire functions. For instance, Fatou  \cite{beardon}, proved that if $f$ and $g$ are two rational functions which are permutable, then $F(f)=F(g)$. This was the most famous result  that motivated the dynamics of composition of complex functions. Similar results for transcendental entire functions is still not known, though it holds in some very special cases,  \cite[Lemma 4.5]{baker3}.
If $f$ and $g$ are transcendental entire functions, then so is $f\circ g$ and $g\circ f$ and the dynamics of one composite entire function helps in the study of the dynamics of the other and vice-versa. 
If $f$ and $g$ are transcendental entire functions, the dynamics of $f\circ g$ and $g\circ f$ are very similar. In  \cite{berg5, Poon}, it was shown $f\circ g$ has  wandering domains if and only if $g\circ f$ has  wandering domains. In \cite{dinesh1}     the authors have constructed several  examples where the dynamical behavior of $f$ and $g$ vary greatly from the dynamical behavior of $f\circ g$ and $g\circ f.$ Using approximation theory of entire functions, the authors have shown the existence of entire functions $f$ and $g$ having infinite number of domains satisfying various properties and relating it to their composition. They explored and enlarged all the maximum possible ways of the solution in comparison to the past result worked out.

Recall that $w\in\C$ is a critical value of a transcendental entire function $f$ if there exist some $w_0\in\C$ with $f(w_0)=w$ and $f'(w_0)=0.$ Here $w_0$ is called a critical point of $f.$ The image of a critical point of $f$ is  critical value of $f.$ Also recall that $\zeta\in\C$ is an asymptotic value of a transcendental entire function $f$ if there exist a curve $\Gamma$ tending to infinity such that $f(z)\to \zeta$ as $z\to\ity$ along $\Gamma.$ The set of all asymptotic values of $f$ will be denoted by   $AV(f)$. In \cite{dinesh2} the authors have considered the dynamical properties of transcendental entire functions $f, g$ and their compositions. They have given  several conditions under which Fatou set of  $f$ coincide with that of $f\circ g.$  They have  also proved some result giving relationship between singular values of transcendental entire functions and their compositions.

Recall the Eremenko-Lyubich class
 \[\mathcal{B}=\{f:\C\to\C\,\,\text{transcendental entire}: \text{Sing}{(f^{-1})}\,\text{is bounded}\},\]
(where Sing$f^{-1}$ is the set of critical values and asymptotic values of $f$ and their finite limit points). Each $f\in\mathcal{B}$ is said to be of bounded type. A transcendental entire function $f$ is of finite type if Sing$f^{-1}$ is a finite set. Furthermore if the transcendental entire functions $f$ and $g$ are of bounded type then so is $f\circ g$ as Sing $((f\circ g)^{-1})\subset$ Sing $f^{-1}\cup f(\text{Sing}(g^{-1})),$ \cite{berg5}. Singularities of a transcendental map plays an important role in its dynamics. For any transcendental entire function Sing$f^{-1}\neq\emptyset,$ \cite[p.\ 66]{Hua}. It is well known \cite{el2, keen}, if $f$ is of finite type then it has no wandering domains. Recently Bishop \cite{bishop} has constructed an example of a function of bounded type having a wandering domain. Let $E(f)=\cup_{n\geq 0}f^{n}(\text{Sing}f^{-1})$ and $E'(f)$ be the derived set of $E(f),$ that is, the set of finite limit points of $E(f).$ It is well known  \cite{berg2}, if $U\subset F(f)$ is a wandering domain, then all limit functions of $\{f^n\vert_{U}\}$ are constant and are contained in $(E'(f)\cap J(f))\cup\{\ity\}.$ Furthermore if  $C$ denotes the class of transcendental entire functions with $J(f)\cap E'(f)=\emptyset,$ and if $f\in\mathcal{B}\cap C,$ then $f$ does not have any wandering domains, \cite[Corollary]{berg2}. 
 
The order of a transcendental entire function $f$ is defined as 
\[\rho_f=\limsup_{r\to\ity}\frac{\log \log M(r, f)}{\log r}\]
where 
\[M(r, f)=max\{|f(z)|: |z|\leq r\}.\]

Here we shall consider the dynamics of semiconjugated entire functions. Some results are motivated by work in \cite{ap1}. We provide several conditions under which the semiconjugacy  carries Fatou set of one entire function into  Fatou set of other entire function appearing in the semiconjugation. We  also show that under certain condition on the growth  of  entire functions appearing in the semiconjugation, the number of  asymptotic values of the derivative of composition of the entire functions is bounded.

\section{Theorems and their proofs}\label{sec2}

\begin{theorem}\label{sec2,thm1}
Let $g$ and $h$ be transcendental entire functions and $f$ be continuous and open such that $f\circ g=h\circ f.$ Suppose $g$ has neither wandering domains nor Baker domains. Then $f(F(g))\subset F(h).$ 
\end{theorem}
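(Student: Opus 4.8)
The plan is to fix a point $z_0\in F(g)$ and show that the family $\{h^n\}$ is normal in a neighborhood of $f(z_0)$, so that $f(z_0)\in F(h)$; since $z_0$ is arbitrary this yields $f(F(g))\subset F(h)$. The starting point is the iterated semiconjugacy relation, obtained by an immediate induction from $f\circ g=h\circ f$, namely
\[f\circ g^n=h^n\circ f\qquad(n\in\N).\]
The idea is to transport a normality statement for $\{g^n\}$ near $z_0$ across $f$ into a normality statement for $\{h^n\}$ near $f(z_0)$, the openness of $f$ being exactly what guarantees that the image of a neighborhood of $z_0$ is again a neighborhood of $f(z_0)$.

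First I would establish uniform boundedness of the iterates of $g$ near $z_0$. Let $U$ be the component of $F(g)$ containing $z_0$. Since $g$ has no wandering domains, $U$ is eventually periodic, landing after finitely many steps on a periodic cycle of components; and since the possibility of a Herman ring is excluded for transcendental entire functions and $g$ has no Baker domains, this periodic cycle is an attracting, parabolic, or Siegel disk cycle. In each of these three cases the forward orbit of every point of $U$ stays bounded, so no subsequence of $\{g^n\}$ can converge to $\ity$ locally uniformly on $U$. Choosing a closed disk $\overline{N'}$ around $z_0$ inside a normality neighborhood of $z_0$, normality of $\{g^n\}$ together with the absence of $\ity$ as a limit function forces $\sup_n\sup_{z\in\overline{N'}}|g^n(z)|=:M<\ity$ (a sequence in a normal family that is unbounded on a compact set has a subsequence tending to $\ity$, which we have just ruled out).

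Next I would transfer this bound through $f$. Set $w_0=f(z_0)$ and let $V=f(N')$, which is an open neighborhood of $w_0$ because $f$ is open. For any $w\in V$ write $w=f(z)$ with $z\in N'$; then by the iterated relation $h^n(w)=h^n(f(z))=f(g^n(z))$, and since $g^n(z)$ lies in the closed disk of radius $M$ and $f$ is continuous, we get $|h^n(w)|\le M'$ for all $n$, where $M'=\max_{|\ze|\le M}|f(\ze)|<\ity$. Thus $\{h^n\}$ is a family of entire functions uniformly bounded on the domain $V$, so by Montel's theorem it is normal on $V$. Hence $w_0\in V\subset F(h)$, which completes the argument.

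The main obstacle is the uniform boundedness step, i.e.\ ruling out $\ity$ as a limit function of $\{g^n\}$ near $z_0$; this is precisely where both hypotheses enter, through the classification of periodic Fatou components into attracting, parabolic and Siegel disk types. Once $\ity$ is excluded and local boundedness is in hand, the passage to $h$ via the continuity and openness of $f$ and an application of Montel's theorem is routine. One minor point I would check carefully is that $h^n(w)$ depends on $w$ alone even when $f$ is not injective, so that the bound on $V$ is well defined; this is automatic, since $h^n$ is defined independently of any choice of preimage of $w$ under $f$.
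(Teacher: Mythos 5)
Your proof is correct, and its second half takes a genuinely different route from the paper's. Both arguments rest on the same key fact: the hypotheses on $g$ force all limit functions of $(g^n)$ on components of $F(g)$ to be finite, so normality yields a uniform bound $M$ for $\bigcup_n g^n(\overline{N'})$. The paper simply quotes this (citing finiteness of limit functions and Baker's lemmas), whereas you derive it from the classification of periodic Fatou components; that is a more self-contained justification, and the one delicate point there --- that orbits in a Siegel disc stay bounded even though the disc itself may be unbounded, because each orbit lies on a compact invariant curve --- is handled correctly by your argument. Where the proofs diverge is the conclusion. The paper argues by contradiction: assuming $f(\be)\in J(h)$, it invokes the blow-up property of Julia sets (its Lemmas \ref{sec2,lem1} and \ref{sec2,lem2}), choosing a non-exceptional value $\ze$ of $h$ lying outside $f(g^n(V))=h^n(f(V))$ for large $n$, and derives a contradiction from the fact that $h^n(f(V))$ must eventually cover a large compact set. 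You instead argue directly: every point of $V=f(N')$ is $f(z)$ for some $z\in N'$, so $h^n(f(z))=f(g^n(z))$ gives $|h^n|\le \max_{|\ze|\le M}|f(\ze)|$ on $V$, and Montel's theorem yields normality of $\{h^n\}$ on $V$ at once. Your route is more elementary and arguably cleaner: it dispenses entirely with exceptional values, the blow-up lemmas, and the paper's somewhat delicate small-diameter argument, with Montel's theorem as the only extra ingredient; the paper's contradiction scheme, by contrast, is the one that generalizes to its Theorem \ref{sec2,thm3}, where no uniform bound on $h^n\circ f$ is available and the blow-up property is genuinely needed.
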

For the proof we will need the following lemmas:
\begin{lemma}\cite[Lemma 2.1]{baker3}\label{sec2,lem1} 
Let $f$ be a transcendental entire function, $z_0\in J(f)$ and let $w\in\C$ which is not a Fatou exceptional value of $f.$ Then there exist a sequence $z_{n_k}, n_k\in\N$ such that $f^{n_k}(z_{n_k})=w, z_{n_k}\to z_0,$ as $n_k\to\ity.$
\end{lemma}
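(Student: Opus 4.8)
The plan is to reduce the statement to the \emph{blow-up property} of the Julia set and then extract the desired sequence by shrinking neighborhoods around $z_0$. Concretely, I would first establish the following claim: for every neighborhood $U$ of $z_0$ and every $N\in\N$ there exist an integer $n\geq N$ and a point $z\in U$ with $f^n(z)=w$. Granting this, I would apply it to the nested neighborhoods $U_k=\{z\in\C:|z-z_0|<1/k\}$ together with the choice $N=k$, obtaining indices $n_k\geq k$ and points $z_{n_k}\in U_k$ with $f^{n_k}(z_{n_k})=w$. Then $n_k\to\ity$ and $|z_{n_k}-z_0|<1/k\to 0$, which is precisely the assertion.

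To prove the claim I would fix $U$ and $N$ and set $W=\bigcup_{n\geq N}f^n(U)$. A direct computation gives $f(W)=\bigcup_{n\geq N+1}f^n(U)\subseteq W$, so $W$ is forward invariant and hence its complement $\C\setminus W$ is backward invariant, that is, $f^{-1}(\C\setminus W)\subseteq\C\setminus W$. Next I would argue that $\C\setminus W$ contains at most one point. Since $z_0\in J(f)$, the family $\{f^n\}_{n\geq N}$ fails to be normal on $U$ (discarding the finitely many iterates below $N$ does not affect normality). If $\C\setminus W$ contained two distinct points $a$ and $b$, then every $f^n$ with $n\geq N$ would omit both $a$ and $b$ on $U$, and Montel's theorem would force $\{f^n\}_{n\geq N}$ to be normal on $U$, contradicting $z_0\in J(f)$.

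It then remains to show that the unique possible exceptional point of $\C\setminus W$ cannot be $w$. Suppose $\C\setminus W=\{c\}$. Backward invariance yields $f^{-1}(c)\subseteq\{c\}$, so the complete backward orbit $\bigcup_{n\geq 0}f^{-n}(c)$ reduces to the single point $\{c\}$: either $c$ is the value omitted by $f$ in the sense of Picard, so that $f^{-1}(c)=\emptyset$, or $f^{-1}(c)=\{c\}$, and in either case this orbit is finite. Thus $c$ is a Fatou exceptional value of $f$. Since $w$ is assumed not to be a Fatou exceptional value, we conclude $w\neq c$, whence $w\in W$ and the claim follows.

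The main obstacle is the identification of the omitted point as a Fatou exceptional value; the whole argument hinges on combining Montel's theorem, which cuts $\C\setminus W$ down to at most one point, with the backward invariance of $\C\setminus W$, which certifies that the surviving point has a finite backward orbit. Once these two observations are in place, the non-exceptionality hypothesis on $w$ finishes the claim, and the diagonal choice of shrinking neighborhoods together with the freedom to take $N$ arbitrarily large delivers a sequence with $n_k\to\ity$ and $z_{n_k}\to z_0$.
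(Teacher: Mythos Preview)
The paper does not supply its own proof of this lemma; it is quoted verbatim from Baker's 1984 paper \cite{baker3} and used as a black box. Your argument is the standard blow-up proof via Montel's theorem---showing that $\bigcup_{n\ge N}f^{n}(U)$ omits at most one point of $\C$, and that any omitted point must be Fatou exceptional by backward invariance---and it is correct; this is essentially the route taken in Baker's original lemma and in the standard references.
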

\begin{lemma}\cite[Lemma 2.2]{baker3}\label{sec2,lem2}
Let $f$ be a transcendental entire function. Let $\zeta\in J(f), W$ be an open neighborhood of $\zeta$ and suppose $K$ is a compact plane set which does not contain any Fatou exceptional value of $f$. Then there exist an $m$ such that $f^n(W)\supset K$ for all $n>m.$
\end{lemma}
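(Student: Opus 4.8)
The plan is to exploit the non-normality forced by $\zeta\in J(f)$, combined with Montel's theorem, to show first that the full union $\bigcup_{n\ge0}f^n(W)$ omits at most one point of $\C$ and that any such omitted point is necessarily Fatou exceptional; the harder part, extracting a single uniform $m$, I will handle afterwards by an expansion argument. For the first step: since $\zeta\in J(f)$ and $W$ is a neighborhood of $\zeta$, the family $\{f^n\}_{n\ge0}$ is not normal on $W$. Each $f^n$ is entire, hence omits $\ity$; so if the family had two finite values in common in its complement, Montel's theorem (in the form valid for families of entire maps omitting two finite values) would force normality on $W$. As the family is not normal, the set $\C\setminus\bigcup_{n\ge0}f^n(W)$ contains at most one finite point.

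For the second step I would show this omitted point is exceptional. Put $G=\bigcup_{n\ge0}f^n(W)$, an open set with $f(G)\subset G$ since $f(f^n(W))=f^{n+1}(W)\subset G$. If $w\in\C\setminus G$ had a preimage $u\in G$, then $w=f(u)\in f(G)\subset G$, a contradiction; hence $f^{-1}(\C\setminus G)\subset\C\setminus G$. As $\C\setminus G$ has at most one point $a$, this gives $f^{-1}(a)\subset\{a\}$, so the whole backward orbit of $a$ is finite and $a$ is a Fatou exceptional value of $f$. Since $K$ contains no Fatou exceptional value, $K\subset G=\bigcup_{n\ge0}f^n(W)$; and as $K$ is compact while each $f^n(W)$ is open (the non-constant entire $f$ being an open map), $K\subset\bigcup_{0\le n\le N}f^n(W)$ for some $N$.

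The third step is the main obstacle. So far I only know that $K$ is covered by the union of the images, i.e. each point of $K$ lies in $f^n(W)$ for infinitely many $n$; this must be promoted to $K\subset f^n(W)$ for every $n>m$. The difficulty is that the sets $f^n(W)$ are neither increasing nor decreasing, so compactness alone cannot isolate a single iterate. To force monotonicity I would manufacture a self-absorbing disc: choose $z_0\in J(f)\cap W$ with $z_0\neq a$ (possible since $J(f)$ is perfect while the exceptional set is a single point), and produce an integer $p$ and an open disc $D\ni z_0$ with $\overline{D}\subset f^p(D)$. This gives the increasing chain $D\subset f^p(D)\subset f^{2p}(D)\subset\cdots$, whose union is again $\C$ minus exceptional values by the first two steps applied to $\{f^{kp}\}$; compactness and monotonicity then yield $K\subset f^{kp}(D)\subset f^{kp}(W)$ for all $k\ge k_0$. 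To fill the residues, note that each translated disc $f^r(D)$ is again self-absorbing under $f^p$, since $\overline{D}\subset f^p(D)$ gives $f^r(D)\subset f^{p}(f^r(D))$, and $f^r(D)$ still meets $J(f)$ at $f^r(z_0)$; running the same increasing-union argument in each residue class $0\le r<p$ gives $f^{kp+r}(D)\subset f^{kp+r}(W)\supset K$ for all large $k$, and combining the finitely many classes produces a single $m$.

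The crux is therefore the existence of the self-absorbing disc $\overline{D}\subset f^p(D)$, and this is where I expect the real work to lie. The natural route is to locate a repelling periodic point of $f$ inside $J(f)\cap W$, whose linearisation furnishes exactly such an expanding neighborhood. Since the density of repelling cycles is itself frequently \emph{derived} from the blowing-up property, I would instead obtain a single self-cover directly from the non-normality and then take the attracting fixed point of the corresponding contracting inverse branch, thereby producing the repelling point and the disc $D$ without circularity, following the argument of Baker in \cite[Lemma 2.2]{baker3}.
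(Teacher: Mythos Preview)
The paper does not prove this lemma at all: it is stated with the citation \cite[Lemma 2.2]{baker3} and no proof is given, so there is nothing in the paper to compare your argument against. Your sketch is a plausible outline of how such a ``blowing-up'' lemma is established (and you yourself defer the crux to Baker's paper at the end), but for the purposes of the present paper the result is simply quoted, and any proof you supply is necessarily different from the paper's approach, which is to cite it.
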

\begin{lemma}\cite[Lemma 2.3]{baker3}\label{sec2,lem3}
Let $f$ and $g$ be transcendental entire functions. Then
\begin{enumerate}
\item [(i)] if $\al\in F(f)$ and there is a subsequence ($f^{n_k}$), with $n_k\to\ity,$ which has a finite limit in the component of $F(f)$ which contains $\al,$ then $g(\al)\in F(f)$;
\item [(ii)] if $\ity$ is not a limit function of any subsequence of ($f^n$) in a component of $F(f)$, then $g(F(f))\subset F(f).$
\end{enumerate}
\end{lemma}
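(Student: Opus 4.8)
The plan is to derive (ii) from (i) and to prove (i) by contradiction, combining the open-mapping property of $g$ with the covering statement of Lemma \ref{sec2,lem2}. Throughout, the argument uses the commutation relation $f^{n}\circ g=g\circ f^{n}$ for every $n$, i.e.\ that $f$ and $g$ are permutable (as in the source \cite{baker3}); absent such a relation between $f$ and $g$ there is no reason for $g$ to preserve $F(f)$, so I take this to be the operative hypothesis.

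For part (i), let $U$ be the component of $F(f)$ containing $\al$ and let $(f^{n_k})$ converge locally uniformly on $U$ to a limit function $\psi$ that is finite at a point of $U$. Since a limit function on a Fatou component is either holomorphic or identically $\ity$, $\psi$ is holomorphic and hence bounded on compact subsets of $U$. I would fix a relatively compact neighborhood $V$ of $\al$ with $\overline{V}\subset U$; then $f^{n_k}\to\psi$ uniformly on $\overline{V}$, so there is $R>0$ with $f^{n_k}(\overline{V})\subset\{|w|\le R\}$ for all large $k$, whence $f^{n_k}(g(V))=g(f^{n_k}(V))$ lies in the fixed bounded set $g(\{|w|\le R\})$. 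Suppose now, for contradiction, that $g(\al)\in J(f)$. As a nonconstant entire function $g$ is open, so $W:=g(V)$ is a neighborhood of $g(\al)$; applying Lemma \ref{sec2,lem2} with $\ze=g(\al)$ and this $W$, and choosing a closed disk $K$ of radius exceeding $\sup_{|w|\le R}|g(w)|$ that avoids the (at most one) finite Fatou exceptional value of $f$, I obtain $f^{n_k}(g(V))\supset K$ for all large $k$. This contradicts the inclusion $f^{n_k}(g(V))\subset g(\{|w|\le R\})$ just established, so $g(\al)\in F(f)$.

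For part (ii), fix a component $U\subset F(f)$. Since $U\subset F(f)$, the family $\{f^{n}\}$ is normal on $U$, so some subsequence converges locally uniformly on $U$; by hypothesis its limit is not identically $\ity$, hence is finite. Thus every $\al\in U$ satisfies the hypothesis of (i), and (i) yields $g(\al)\in F(f)$; as $U$ is arbitrary, $g(F(f))\subset F(f)$.

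The main obstacle is the interface between the local boundedness supplied by the finite limit $\psi$ and the global covering supplied by Lemma \ref{sec2,lem2}. Two points need care: that $g(V)$ is genuinely a neighborhood of $g(\al)$ — here openness of $g$ is essential, since $g'(\al)$ may vanish — and that the target $K$ can be chosen simultaneously to avoid the exceptional value of $f$ and to be too large to fit inside the fixed bounded set $g(\{|w|\le R\})$. Ordering the quantifiers correctly, namely fixing $R$ from the finite limit first and only then selecting $K$ and invoking Lemma \ref{sec2,lem2}, is what makes the contradiction go through cleanly.
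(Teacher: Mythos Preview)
The paper does not supply its own proof of this lemma; it is quoted from Baker \cite[Lemma 2.3]{baker3} and used without argument as an auxiliary tool in the proof of Theorem \ref{sec2,thm1}. So there is nothing in the present paper to compare your proposal against.

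That said, your proof is correct and follows the natural line of the original source: transfer the locally bounded subsequence $f^{n_k}$ from a neighborhood of $\al$ to one of $g(\al)$ via the commutation $f^{n_k}\circ g=g\circ f^{n_k}$, then collide the resulting uniform bound with the covering property of Lemma \ref{sec2,lem2}. You were right to make the permutability hypothesis $f\circ g=g\circ f$ explicit; in Baker's paper it is the standing assumption for this lemma, and without it the conclusion can fail. Your handling of the two delicate points---openness of $g$ to guarantee $g(V)$ is a neighborhood, and choosing $K$ large enough and away from the exceptional value only after $R$ is fixed---is sound. The deduction of (ii) from (i) is routine and correct.
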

We now give the proof of Theorem 2.1
\begin{proof}
As $g$ has neither wandering domains nor Baker domains, so all limit functions of ($g^n$) on all components of $F(g)$ are finite. Let $\be\in F(g)$ and let $V$ be an open neighborhood of $\be$ such that $\overline V\subset F(g).$ Using Lemma \ref{sec2,lem3},  $\cup_{n=0}^{\ity} g^n(V)$ lie in some compact plane set $K$ on which $f$ is uniformly continuous. We can choose $V$ small enough so that $f(g^n(V))=h^n(f(V))$ has small diameter for $n$ large. Thus it is possible to  choose a point $\ze$ which is not a Fatou exceptional value of $h$ and which lies outside $f(g^n(V))$ for $n$ large. Suppose $f(\be)\in J(h).$ As $f$ is open, so $f(V)$ is an open set and $f(\be)\in f(V).$ Using Lemma \ref{sec2,lem2}, there exist   $m\in\N$ such that $h^n(f(V))\supset K$ for all $n>m.$ From Lemma \ref{sec2,lem1}, there exist a sequence $z_{n}\to f(\be), z_{n}\subset f(V)$ such that $h^n(z_n)=\ze$ as $n\to\ity.$  As $\ze\notin f(g^n(V))=h^n(f(V))$ for $n$ large, we obtain $h^n(z_n)\nsubseteqq h^n(f(V))$ for $n$ large, a contradiction and hence the result.
\end{proof}
An immediate consequence of above theorem is
\begin{corollary}\label{sec2,cor1}
Let $g$ and $h$ be transcendental entire functions and $f$ be continuous and open such that $f\circ g=h\circ f.$ If $g\in\mathcal  B\cap C,$ then  $f(F(g))\subset F(h).$ 
\end{corollary}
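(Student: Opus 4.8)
The plan is to deduce the corollary directly from Theorem \ref{sec2,thm1} by verifying that its hypotheses are satisfied whenever $g \in \mathcal{B} \cap C$. The theorem requires that $g$ has neither wandering domains nor Baker domains, together with the standing assumptions that $f$ is continuous and open and that $f \circ g = h \circ f$; the latter two are carried over verbatim into the corollary, so the only thing to check is the absence of these two kinds of Fatou components.

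First I would dispose of wandering domains. The hypothesis $g \in \mathcal{B} \cap C$ is precisely the condition invoked in the introduction via \cite[Corollary]{berg2}: for $f \in \mathcal{B} \cap C$ (here with $f$ replaced by $g$), all limit functions of $\{g^n\}$ on a wandering domain would lie in $(E'(g) \cap J(g)) \cup \{\infty\}$, and since $g \in C$ means $J(g) \cap E'(g) = \emptyset$, the finite limit functions are excluded, while membership in $\mathcal{B}$ rules out the escaping case; hence $g$ has no wandering domains. I would simply cite this result rather than reprove it.

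Next I would argue that $g$ has no Baker domains. In a Baker domain $U$ one has $g^{nk}(z) \to \infty$, so $\infty$ is a limit function of a subsequence of the iterates on $U$. The standard fact for the Eremenko--Lyubich class is that functions of bounded type cannot have escaping Fatou components: for $f \in \mathcal{B}$ the escaping set $I(f)$ is contained in the Julia set, so no component of $F(g)$ can have $\infty$ as a limit function. Thus $g \in \mathcal{B}$ forbids Baker domains as well.

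With both conditions verified, Theorem \ref{sec2,thm1} applies and yields $f(F(g)) \subset F(h)$, which is the assertion. The only mild obstacle is pinning down the cleanest justification that bounded type excludes Baker domains; the paper's own framework already quotes \cite[Corollary]{berg2} for wandering domains, and I expect the Baker-domain exclusion to rest on the same Eremenko--Lyubich property that the escaping set of a bounded-type function lies in its Julia set. Once these two citations are in place, the corollary is an immediate specialization of the theorem.
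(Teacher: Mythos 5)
Your proposal is correct and is exactly the argument the paper intends: the paper states the corollary as an immediate consequence of Theorem \ref{sec2,thm1}, relying on the fact (quoted in its introduction from \cite[Corollary]{berg2}) that $g\in\mathcal{B}\cap C$ has no wandering domains, together with the Eremenko--Lyubich result \cite{el2} that a function of class $\mathcal{B}$ has its escaping set in the Julia set, hence no Baker domains. Both citations you identified are in the paper's bibliography, so your verification of the theorem's hypotheses fills in the omitted details in precisely the intended way.
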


\begin{theorem}\label{sec2,thm2}
Let $g$ and $h$ be transcendental entire functions and $f$ be continuous and open such that $f\circ g=h\circ f.$ Suppose $f$ omits two values in the complement of closure of each disk in $\C,$ then $f(F(g))\subset F(h).$
\end{theorem}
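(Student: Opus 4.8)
The plan is to reduce the statement to a single application of Montel's fundamental normality test applied to the family of iterates $\{h^n\}$. First I would record the elementary consequence of the semiconjugacy that $h^n\circ f=f\circ g^n$ for every $n\in\N$; this follows by induction from $f\circ g=h\circ f$, since $f\circ g^{n}=(h\circ f)\circ g^{n-1}=h\circ(f\circ g^{n-1})=h\circ h^{n-1}\circ f=h^{n}\circ f$. Now fix $\be\in F(g)$ and choose an open neighborhood $U$ of $\be$ with $\overline U\subset F(g)$. Since $f$ is open, $f(U)$ is an open set containing $f(\be)$, and it suffices to show that $\{h^n\}$ is normal on $f(U)$, for then $f(\be)\in F(h)$.

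The decisive step is to exhibit two fixed finite values omitted by every $h^n$ throughout $f(U)$. Applying the hypothesis to, say, the unit disk, we obtain two distinct points $a,b$ lying outside the closed unit disk that are omitted by $f$, that is, $a,b\notin f(\C)$. Because $h^n\circ f=f\circ g^n$, for any $w=f(z)\in f(U)$ we have $h^n(w)=f(g^n(z))\in f(\C)$, and hence $h^n(f(U))\subset f(\C)\subset\C\setminus\{a,b\}$ for every $n$. Thus $\{h^n\}$ is a family of analytic functions on the open set $f(U)$, each member of which omits the two finite values $a$ and $b$. By Montel's theorem this family is normal on $f(U)$, whence $f(U)\subset F(h)$ and in particular $f(\be)\in F(h)$. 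As $\be\in F(g)$ was arbitrary, $f(F(g))\subset F(h)$.

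The only point I expect to require care is the correct reading of the hypothesis: the phrase that $f$ ``omits two values in the complement of the closure of each disk'' must be understood as producing a pair of distinct finite values lying outside $f(\C)$ entirely, for these are precisely the fixed exceptional values that Montel's criterion demands. The per-disk strength of the hypothesis is in fact more than the argument consumes, since a single disk already furnishes the needed pair; one should also note that two omitted values suffice here (rather than three), which is legitimate because each $h^n$ is entire and therefore automatically omits $\ity$, placing the family within the scope of the analytic form of Montel's normality test. It is worth observing that the normality of $\{g^n\}$ on $U$ is never actually invoked, so the same reasoning yields the stronger conclusion $f(\C)\subset F(h)$; the theorem is the special case obtained by restricting attention to points of $F(g)$.
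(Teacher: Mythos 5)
Your argument hinges on one reading of the hypothesis: that there exist two values $a,b\notin f(\C)$, i.e.\ that $f$ \emph{globally} omits two values (which happen to lie far out). That is not the reading the paper uses, and under the paper's reading your key step fails. As the paper's own proof makes explicit at the moment it invokes the hypothesis, the intended meaning is that for each disk $D$ the \emph{restriction} of $f$ to $\C\setminus\overline{D}$ omits two values, i.e.\ $f(\C\setminus\overline{D})$ misses at least two points; the omitted pair may depend on $D$, and $f$ itself may perfectly well be surjective. The model example is a homeomorphism of $\C$ (the case of an honest conjugacy): there $f(\C\setminus\overline{D})=\C\setminus f(\overline{D})$ misses a whole continuum for every disk, so the hypothesis holds, yet $f(\C)=\C$ omits nothing. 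Under this reading your inclusion $h^n(f(U))\subset f(\C)\subset\C\setminus\{a,b\}$ collapses: the value $f(g^n(z))$ is known to avoid the pair attached to the disk $\{|w|\le A\}$ only when $|g^n(z)|>A$, and nothing guarantees that without using the dynamics of $g$. Your own closing remark is the red flag: on your reading the theorem would give $f(\C)\subset F(h)$ with the assumption $\be\in F(g)$ never used, and it would be vacuous for every surjective semiconjugacy --- in particular for genuine conjugacies, exactly the situation such a theorem is meant to cover.

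The paper's proof shows where the dynamics of $g$ must enter. Fix $\be\in F(g)$, a neighborhood $V$ with $\overline{V}\subset F(g)$, and suppose $f(\be)\in J(h)$. By normality of $(g^n)$ on $V$ there are two cases. If some subsequence $g^{n_k}$ has a finite analytic limit on $V$, the orbit pieces $g^{n_k}(V)$ stay in a compact set and one runs the blow-up argument of Theorem \ref{sec2,thm1} (via Lemmas \ref{sec2,lem1} and \ref{sec2,lem2}): for small $V$ the sets $h^{n_k}(f(V))=f(g^{n_k}(V))$ have small diameter and eventually avoid a non-exceptional value $\ze$ of $h$, while the blow-up property of $J(h)$ at $f(\be)$ forces $h^{n}(f(V))$ to hit $\ze$ for all large $n$ --- a contradiction. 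Otherwise $g^n\to\ity$ uniformly on $V$, and only here does the omission hypothesis act: for $n$ large every $z\in V$ satisfies $|g^n(z)|>A$, so $h^n(f(V))=f(g^n(V))\subset f(\{|w|>A\})$, which by hypothesis (applied to the disk of radius $A$) misses two fixed points; Montel then makes the tail of $(h^n)$, hence the whole family, normal on the open set $f(V)$, again a contradiction. So your Montel step is the right tool for the second case, but your proposal misapplies the hypothesis there and omits the first case entirely, which is where Theorem \ref{sec2,thm1}'s machinery is indispensable.
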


\begin{proof}
Let $\be\in F(g)$ and suppose $f(\be)\notin F(h),$ that is, $f(\be)\in J(h).$ Select an open neighborhood $V$ of $\be$ such that $\overline V\subset F(g).$ As ($g^n$) is normal on $V,$ every sequence in ($g^n$) has  a subsequence  which converges locally uniformly on $V$ to either an analytic function say $G,$  or to $\ity.$ Let $g^{n_k}$ be a subsequence of ($g^n$) such that $g^{n_k}\to G$ on $V$ as $n_k\to\ity$. Then the argument used in the proof of Theorem \ref{sec2,thm1} can be used verbatim to arrive at the conclusion. Now suppose $g^{n_k}\to\ity$ on $V$ as $n_k\to\ity.$ Then for all $z\in V$ and any $A>0,$ there exist $m\in\N$ such that $|g^{n_k}(z)|>A$ for all $n_k>m.$ Then for all $n_k>m$ and for all $z\in V, |h^{n_k}(f(z))|=|f(g^{n_k}(z))|=|f(\zeta_{n_k})|,$ where $|\ze_{n_k}|=|g^{n_k}(z)|>A.$ Thus for all $n_k>m$ each $h^{n_k}$ omits two points on $f(V)$ by hypothesis and hence by Montel's Normality Criterion, ($h^n$) is normal on $f(V)$. This is a contradiction to $f(\be)\in J(h)$ and hence the result.
\end{proof}

An immediate consequence of above theorem is
\begin{corollary}\label{sec2,cor2}
Let $f$ and $g$ be permutable transcendental entire functions. Suppose $f$ and $g$ omit two values (need not be same) in the complement of closure of each disk in $\C.$ Then $F(f)=F(g).$
\end{corollary}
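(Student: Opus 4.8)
The plan is to derive Corollary \ref{sec2,cor2} as a symmetric double application of Theorem \ref{sec2,thm2}. Since $f$ and $g$ are permutable transcendental entire functions, we have $f\circ g=g\circ f$, so $f$ and $g$ are semiconjugated by $f$ (with the roles $g\rightsquigarrow g$, $h\rightsquigarrow g$ in the notation of the theorem), and simultaneously $g\circ f=f\circ g$ semiconjugates $f$ and $f$ via $g$. Because $f$ and $g$ are entire, they are in particular continuous, and I would first record that a nonconstant entire function is an open map, so the openness hypothesis of Theorem \ref{sec2,thm2} is automatic for both. The omission hypothesis is assumed for each of $f$ and $g$ separately, so both maps satisfy all the hypotheses needed to play the role of the semiconjugacy.

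The key steps are then as follows. First I would apply Theorem \ref{sec2,thm2} with semiconjugacy $f$ and with both entire functions taken to be $g$ (legitimate since $f\circ g=g\circ f$); this yields $f(F(g))\subset F(g)$. Next I would apply Theorem \ref{sec2,thm2} with semiconjugacy $g$ and both entire functions taken to be $f$, using $g\circ f=f\circ g$ and the omission hypothesis on $g$; this yields $g(F(f))\subset F(f)$. To pass from these inclusions to the equality of Fatou sets, I would invoke the complete invariance of the Fatou set under the associated map together with permutability: the inclusion $f(F(g))\subset F(g)$ combined with the standard fact that $g$ maps $F(f)$ into itself (and the identity $f\circ g=g\circ f$) forces each of $F(f)$ and $F(g)$ to be contained in the other. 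Concretely, one shows $F(g)\subset F(f)$ by observing that normality of $(g^n)$ near a point is preserved under the permutable map $f$, and symmetrically $F(f)\subset F(g)$.

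The main obstacle I anticipate is the final bookkeeping step that upgrades the two one-sided statements into the equality $F(f)=F(g)$. The inclusions produced directly by Theorem \ref{sec2,thm2} are of the form $f(F(g))\subset F(g)$ and $g(F(f))\subset F(f)$, which say each map preserves the \emph{other's} Fatou set rather than immediately comparing $F(f)$ with $F(g)$. Bridging this gap is where permutability must be used essentially: I would exploit that if $(g^n)$ is normal on a neighborhood $U$ then, because $f$ commutes with $g$, the family $(g^n)$ composed appropriately with the fixed analytic map $f$ remains controlled, so that normality transfers between the two iteration families. The delicate point is handling the case where limit functions are $\infty$ rather than finite, which is precisely why the omission-of-two-values hypothesis is imposed on \emph{both} functions; I would mirror the Montel-argument of Theorem \ref{sec2,thm2} in each direction to close both inclusions and conclude $F(f)=F(g)$.
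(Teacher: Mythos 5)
Your setup is exactly the paper's intended route: since $f\circ g=g\circ f$, apply Theorem \ref{sec2,thm2} once with $f$ as the (continuous, open) semiconjugacy and $g$ in both function slots to get $f(F(g))\subset F(g)$, and once with the roles exchanged to get $g(F(f))\subset F(f)$; this double application is why the paper can call the corollary immediate. You also correctly flag that these two inclusions do not by themselves say $F(f)=F(g)$. The problem is that your proposed bridge is not a proof. The mechanism you describe --- ``normality of $(g^n)$ near a point is preserved under the permutable map $f$,'' or ``the family $(g^n)$ composed with the fixed analytic map $f$ remains controlled, so that normality transfers between the two iteration families'' --- only relates the family $(g^n)$ to the family $(g^n\circ f)=(f\circ g^n)$; it never produces normality of the \emph{other} iteration family $(f^n)$, which is what membership in $F(f)$ requires. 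Likewise, ``mirroring the Montel argument of Theorem \ref{sec2,thm2}'' cannot close the gap: that argument consumes the omission hypothesis on the semiconjugacy, which has already been spent in obtaining the two inclusions.

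The correct bridge is a different, standard Montel argument in which the omitted values come from the Julia set, and it needs neither permutability nor the omission hypothesis once the inclusions are in hand. From $g(F(f))\subset F(f)$ one gets by induction $g^n(F(f))\subset F(f)$ for all $n\in\N$. Hence every $g^n$ omits, on the open set $F(f)$, every point of $J(f)$. Since $J(f)$ is a nonempty perfect subset of $\C$, it contains two distinct finite points $a\neq b$, so by Montel's Normality Criterion the family $(g^n)$ is normal on each component of $F(f)$; that is, $F(f)\subset F(g)$. Symmetrically, $f(F(g))\subset F(g)$ yields $F(g)\subset F(f)$, and equality follows. Note also that your directions are reversed: $g(F(f))\subset F(f)$ gives $F(f)\subset F(g)$, not $F(g)\subset F(f)$ as in your sketch. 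With this lemma substituted for your transfer argument, your double application of Theorem \ref{sec2,thm2} does prove the corollary.
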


\begin{theorem}\label{sec2,thm3}
Let $g$ and $h$ be transcendental entire functions and $f$ be continuous and open such that $f\circ g=h\circ f.$ Suppose there exist a non constant polynomial $P(z)$ and an entire function $K(z)$ such that $P\circ g(z)=K\circ f(z),$ for all $z\in\C.$ If $V$ is a Baker domain of $F(g),$ then $f(V)\subset F(h).$ 
\end{theorem}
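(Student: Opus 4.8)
The plan is to combine the two functional equations so as to transport the escape of $g$ on the Baker domain $V$ over to $h$ on $f(V)$, and then to contradict the expansivity of $h$ along its Julia set. First I would record the iterated semiconjugacy $f\circ g^n=h^n\circ f$ for every $n$, which follows from $f\circ g=h\circ f$ by an immediate induction. Substituting $u=g^{nk-1}(z)$ into the hypothesis $P(g(u))=K(f(u))$ and using this iterated form yields the key identity
\[P\bigl(g^{nk}(z)\bigr)=K\bigl(h^{nk-1}(f(z))\bigr)\qquad(z\in\C),\]
where $k$ denotes the period of $V$. This identity is the engine of the argument: its left-hand side is controlled because $V$ is a Baker domain, while its right-hand side will then be forced to escape, pinning down the behaviour of $h$ on $f(V)$.

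Next I would argue by contradiction. Suppose $f(\be)\in J(h)$ for some $\be\in V$. Choose a ball $B$ with $\be\in B$ and $\overline B\subset V$ compact, and set $W=f(B)$, which is an open neighbourhood of $f(\be)$ since $f$ is open. As $V\subset F(g)$ is a Baker domain of period $k$, the sequence $g^{nk}$ tends to $\ity$ locally uniformly on $V$, hence uniformly on $\overline B$; since $P$ is a nonconstant polynomial, $\min_{z\in\overline B}|P(g^{nk}(z))|\to\ity$. Given $w\in W$, pick a preimage $z\in B$ with $f(z)=w$; the key identity then gives $|K(h^{nk-1}(w))|=|P(g^{nk}(z))|\ge\min_{z'\in\overline B}|P(g^{nk}(z'))|$, so $K(h^{nk-1})\to\ity$ uniformly on $W$. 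Because $K$ is entire and hence bounded on every disk, this in turn forces $h^{nk-1}\to\ity$ uniformly on $W$.

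Finally I would play this against Lemma \ref{sec2,lem2}. Fix a point $\ze_0$ that is not a Fatou exceptional value of $h$. Since $f(\be)\in J(h)$ and $W$ is a neighbourhood of $f(\be)$, Lemma \ref{sec2,lem2} applied to the compact set $\{\ze_0\}$ yields an $m$ with $\ze_0\in h^n(W)$ for all $n>m$, in particular $\ze_0\in h^{nk-1}(W)$ for all large $n$. But the previous paragraph shows $h^{nk-1}(W)\subset\{|\ze|>|\ze_0|\}$ for all large $n$, so $\ze_0\notin h^{nk-1}(W)$, a contradiction. Hence $f(\be)\in F(h)$, and as $\be\in V$ was arbitrary, $f(V)\subset F(h)$.

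The step I expect to be the main obstacle is upgrading the escape of $h^{nk-1}$ from pointwise to uniform on a full neighbourhood of $f(\be)$: because $f$ need not be proper, the $f$-preimages of a compact subset of $f(V)$ may accumulate on $\partial V$, where $g^{nk}$ does not escape uniformly. The device of taking $W=f(B)$ for a ball $B$ centred at the chosen preimage $\be$ circumvents this, since each $w\in W$ then has at least one preimage lying in the compact set $\overline B\subset V$, which is exactly what the lower bound in the key identity requires.
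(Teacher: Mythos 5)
Your proof is correct, and it runs on the same engine as the paper's: the identity obtained by combining $P\circ g=K\circ f$ with the iterated semiconjugacy $f\circ g^n=h^n\circ f$ (your $P(g^{nk}(z))=K(h^{nk-1}(f(z)))$, the paper's $K(h^n(f(z)))=P(g^{n+1}(z))$) is played against the blow-up property of $J(h)$. The execution, however, is a mirror image. The paper localizes \emph{smallness} of $h$: it uses blow-up to produce, for each large $n$, a point $z_0\in V$ with $|h^n(f(z_0))|<1$, whence $|K(h^n(f(z_0)))|\le\max_{|z|=1}|K(z)|=A$, while the identity and the escape of $g^{n+1}(z_0)$ force this same quantity to exceed $A+1$. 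You instead localize \emph{largeness} of $h$: you show $h^{nk-1}\to\infty$ uniformly on $W=f(B)$ and then contradict Lemma \ref{sec2,lem2} applied to a fixed non-exceptional target point. What your version buys is rigor on a point the paper elides: the paper asserts $|g^n(z)|>M$ for all $n>m$ and \emph{all} $z\in V$, i.e.\ uniform escape on the whole Baker domain, which is false in general --- escape on a Baker domain is only locally uniform, and the paper's points $z_0=z_0(n)$ produced by blow-up may drift toward $\partial V$, where uniform escape fails (e.g.\ near repelling periodic points on $\partial V$). Your device of taking $W=f(B)$ with $\overline B\subset V$ compact, so that every $w\in W$ has an $f$-preimage in $\overline B$ on which $g^{nk}$ does escape uniformly, is precisely what is needed to close that gap; you also track the period $k$ of the Baker domain, which the paper silently takes to be $1$.
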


\begin{proof}
Let $V\subset F(g)$  be a Baker domain. Then for  $z\in V,$ $g^n(z)\to\ity$  as $n\to\ity.$ Suppose $f(V)\nsubseteqq F(h).$ Then there exist $\al\in V$ such that $f(\al)\in J(h).$ Let $A=max_{|z|=1}|K(z)|.$ As $P(z)$ is a non constant polynomial, there exist $M>0$ such that $|P(z)|>A+1,$ for $|z|>M.$ Also $g^n(z)\to\ity$ for   $z\in V$ as $n\to\ity,$ there exist $m\in\N$ such that $|g^n(z)|>M$ for all $n>m$ and for all $z\in V,$ which implies $|g(z)|>M$ for $z\in g^n(V), n>m.$ As $f(V)$ is an open neighborhood of $f(\al)\in J(h),$ for arbitrarily large $n,$ ($h^n$) takes all values in $f(V)$ with the possible exception of one point. Thus there exist $f(z_0), z_0\in V$ such that for all $n>m, |h^n(f(z_0)|<1.$ Now $1>|h^n(f(z_0))|=|f(g^n(z_0))|$ and so $|K(h^n(f(z_0)))|\leq A.$ As $\be=g^n(z_0)\in g^n(V),$ we have $|g(\be)|>M$ and $|f(\be)|<1.$ Now $|K(h^n(f(z_0)))|=|K(f(g^n(z_0)))|=|P(g(g^n(z_0)))|>A+1,$ which is a contradiction and hence the result.
\end{proof}
We now show that if $f, g$ and $h$ are entire functions satisfying $f\circ g=h\circ f,$ and $f$ and $g$ are of finite order, then derivative of $f\circ g$ has bounded set of asymptotic values.
We first prove a lemma:
\begin{lemma}\label{sec2,lem4}
Let $f$ and $g$ be transcendental entire functions having bounded set of asymptotic values. Then $f.g$ has bounded set of asymptotic values, where $f.g(z)=f(z)g(z),$ for all $z\in\C.$
\end{lemma}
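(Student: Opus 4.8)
The plan is to fix an arbitrary finite asymptotic value $w$ of $f\cdot g$ and bound $|w|$ by a constant depending only on radii $R_f,R_g$ with $AV(f)\subset\{|\zeta|\le R_f\}$ and $AV(g)\subset\{|\zeta|\le R_g\}$; since such a bound would be uniform over all asymptotic values, it shows $AV(f\cdot g)$ is bounded. So let $\Ga$ be a curve tending to $\ity$ along which $f(z)g(z)\to w$. The idea is to read off the behaviour of the two factors separately along $\Ga$ by passing to the cluster sets $C_f$ and $C_g$ of $f$ and $g$ as $z\to\ity$ along $\Ga$; each is a continuum in $\C\cup\{\ity\}$, and since $f(z)g(z)\to w$ exactly, for every $a\in C_f\cap(\C\setminus\{0\})$ one has $w/a\in C_g$, and symmetrically.

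First (the main case) I would treat the situation in which $f$ and $g$ actually converge along $\Ga$, say $f(z)\to a$ and $g(z)\to b$ with $a,b\in\C$. Then $\Ga$ is an asymptotic path realising $a$ for $f$ and $b$ for $g$, so $a\in AV(f)$ and $b\in AV(g)$; hence $|a|\le R_f$, $|b|\le R_g$, and $|w|=|ab|\le R_fR_g$. More generally, whenever $C_f$ reduces to a single finite nonzero point $a$ (so that $C_g$ reduces to the single finite point $w/a$) the same conclusion applies, and the case $w=0$ is trivial since $0$ lies in every disk.

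The hard part, which I expect to be the main obstacle, is the indeterminate configuration in which one factor blows up and the other decays, say $\ity\in C_f$ forcing $0\in C_g$, the product $\ity\cdot 0$ still giving a finite $w$. Here neither factor by itself exhibits $w$ as a product of two bounded asymptotic values, and a priori $|w|$ is not controlled by $R_f,R_g$. My plan to close this case is to exploit that $g\to0$ (resp.\ $f\to\ity$) along $\Ga$ forces $0\in AV(g)$ together with a finer analysis of the factor tending to infinity: on a tail of $\Ga$ the product is nonzero, so $f$ never vanishes there and $|f|\sim|w|/|g|$, exhibiting $f$ as the reciprocal (up to the bounded factor $|w|$) of a function tending to its asymptotic value $0$. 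I would then try either to produce from $\Ga$ a nearby asymptotic path on which the decaying factor tends to a finite \emph{nonzero} asymptotic value, or to rule out an unbounded $w$ by a Montel/normal-family argument in the spirit of the proof of Theorem \ref{sec2,thm2}.

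Finally I would assemble the cases: in every configuration the modulus of the asymptotic value is bounded by a quantity depending only on $R_f$ and $R_g$, so $AV(f\cdot g)\subset\{|\zeta|\le C(R_f,R_g)\}$ is bounded. The routine points — that the cluster set along a curve is a continuum and that convergence of $f\cdot g$ transfers to the stated relation between $C_f$ and $C_g$ — are quick to verify, and the genuine content lies entirely in disposing of the $\ity\cdot0$ case above.
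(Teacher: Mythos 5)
Your proposal is not a complete proof, and you say so yourself: the case you call ``the hard part'' --- $f\to\ity$ and $g\to 0$ along $\Ga$ while $f(z)g(z)\to w$ with $|w|$ possibly huge --- is met only with two candidate strategies, neither of which is carried out and neither of which obviously works. Producing ``a nearby asymptotic path on which the decaying factor tends to a finite nonzero asymptotic value'' has no mechanism behind it (an asymptotic path realising $0$ for $g$ gives you no other asymptotic paths), and a Montel argument in the spirit of Theorem~\ref{sec2,thm2} has no family to act on: there the iterates $(h^{n_k})$ omit values on a fixed open set, whereas here you have a single function $f\cdot g$ and a single curve. There is also a second unhandled configuration hidden in your reduction: if the cluster set $C_f$ of $f$ along $\Ga$ is a nondegenerate continuum (the factors oscillate), then neither factor converges, no asymptotic value of $f$ or of $g$ is produced at all, and the relation $a\in C_f\cap(\C\setminus\{0\})\Rightarrow w/a\in C_g$ puts no bound whatsoever on $|w|$, because cluster values along a curve need not be asymptotic values (think of $\sin$ on the real axis, whose cluster set is $[-1,1]$ but which has no finite asymptotic value). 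So the only case you have actually proved is the one where both factors converge to finite limits, giving $|w|\le R_fR_g$.

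For comparison: the paper's own proof is exactly that easy case with the two gaps elided rather than filled. From $f(z)g(z)\to z_n$ along $\Ga_n$ it asserts ``therefore'' $f(z)\to a$ and $g(z)\to b$ for some $a,b\in\C\cup\{\ity\}$ --- unjustified, this is your oscillation case --- and then concludes that $z_n=ab$ is bounded because $AV(f)$ and $AV(g)$ are bounded, which silently assumes $a$ and $b$ are both finite; if $a=\ity$ and $b=0$ (your $\ity\cdot 0$ case) the product $ab$ is meaningless and nothing bounds $z_n$ in terms of $AV(f)$ and $AV(g)$. So your case analysis correctly isolates the genuine content of the lemma, and it is to your credit that you refused to wave it away; but neither you nor the paper disposes of it, and it is not even clear that the statement is true at this level of generality, since nothing a priori prevents an entire function with unbounded asymptotic values from factoring into one factor tending to $\ity$ and another tending to $0$ along the relevant curves. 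Closing the $\ity\cdot 0$ and oscillation configurations (or adding hypotheses that exclude them) requires an idea that is present in neither argument.
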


\begin{proof}
Let $h=f.g$ and suppose $AV(h)$ is unbounded. Then there exist a sequence ($z_n$)$\subset AV(h)$ such that $|z_n|>n,$ for all $n\in\N.$ For this sequence  $z_n\in AV(h)$  there exist for each $n\in\N,$ a curve $\Gamma_n\to\ity,$ such that $h(z)\to z_n$ as $z\to\ity$ on $\Gamma_n.$ As $h=f.g,$  therefore $f(z)\to a$ and $g(z)\to b$ as $z\to\ity$ on $\Gamma_n,$ where $a, b\in\ti\C=\C\cup\{\ity\}.$ This implies  $a$ and $b$ are asymptotic values of $f$ and $g$ respectively, and $h(z)\to ab$ as $z\to\ity$ on $\Gamma_n$ and thus $z_n=ab.$  As $AV(f)$ and $AV(g)$ are bounded, so $\{z_n: n\in\N\}$ is  bounded. This is a contradiction and hence the result.
\end{proof}
We will also require the following result:
\begin{theorem}\cite[Theorem 1]{sharma}\label{sec2,thm5}
Let $f, g$ and $h$ be entire functions satisfying $f\circ g=h\circ f.$ If $\rho_f$ and $\rho_g$ are finite, then $\rho_h$ is finite.
\end{theorem}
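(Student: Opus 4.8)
The plan is to compare the growth of the two sides of the identity $f\circ g=h\circ f$ through the standard maximum‑modulus estimates for compositions. On one side I would use the elementary upper bound $M(r,f\circ g)\le M(M(r,g),f)$, and on the other the P\'olya‑type lower bound: for entire $\phi,\psi$ with $\psi$ nonconstant there is a constant $C$ (depending on $\psi(0)$) such that $M(r,\phi\circ\psi)\ge M(\tfrac18 M(\tfrac r2,\psi)-C,\phi)$ for all large $r$. Applying the latter with $\phi=h$ and $\psi=f$, and writing $R=R(r)=\tfrac18 M(\tfrac r2,f)-C$, the equality $M(r,h\circ f)=M(r,f\circ g)$ gives the central inequality
\[M(R,h)\le M(r,f\circ g)\le M(M(r,g),f),\]
which bounds the growth of $h$ at radius $R$ by the growth of $f\circ g$ at the much smaller radius $r$.

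Next I would pass to double logarithms and feed in $\rho_f,\rho_g<\ity$. Using $\log M(s,f)\le s^{\rho_f+\epsilon}$ and $\log M(r,g)\le r^{\rho_g+\epsilon}$ (valid for all large $s,r$ and any fixed $\epsilon>0$), the central inequality yields
\[\log\log M(R,h)\le\log\log M(M(r,g),f)\le(\rho_f+\epsilon)\log M(r,g)\le(\rho_f+\epsilon)\,r^{\rho_g+\epsilon}.\]
Since $R(r)$ is continuous and strictly increasing to $\ity$, it induces a bijection between large $r$ and large $R$, so $\rho_h=\limsup_{r\to\ity}\frac{\log\log M(R(r),h)}{\log R(r)}$; as $\log R(r)=\log M(\tfrac r2,f)+O(1)$, the whole problem reduces to showing that
\[\rho_h\le\limsup_{r\to\ity}\frac{(\rho_f+\epsilon)\,r^{\rho_g+\epsilon}}{\log M(\tfrac r2,f)}\]
is finite.

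When one of the functions is a polynomial this reduction is unnecessary and the statement follows from bookkeeping of orders, which I would dispatch first. If $f$ is a polynomial of degree $d$, then $\rho_{f\circ g}=\rho_g$ while $\rho_{h\circ f}=d\,\rho_h$, so the identity forces $\rho_h=\rho_g/d<\ity$ (and $\rho_h=0$ if $h$ is a polynomial). If $f$ is transcendental but $g$ is a polynomial, then $\rho_{f\circ g}=\rho_f\deg g<\ity$, so $h\circ f$ has finite order; since the composition of two transcendental entire functions always has infinite order (P\'olya), $h$ must then be a polynomial and $\rho_h=0$. Thus only the case where $f,g,h$ are all transcendental remains.

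This all‑transcendental case is the genuine obstacle, and it is where I expect the real work to lie. Here both $f\circ g$ and $h\circ f$ have infinite order, so order bookkeeping gives nothing, and making the displayed $\limsup$ finite requires a lower bound of the form $\log M(\tfrac r2,f)\ge c\,r^{\rho_g+\epsilon}$ for all large $r$ — that is, a comparison of the (lower) growth of $f$ with the order of $g$. Heuristically the functional equation forces exactly this: matching $\log\log M(r,\cdot)$ on the two sides, $\log\log M(r,f\circ g)\asymp r^{\rho_g}$ while $\log\log M(r,h\circ f)\asymp r^{\rho_f}$, so that $\rho_f=\rho_g$ and in fact $\rho_h=\rho_f$. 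Turning this heuristic into a rigorous lower bound for $\log M(\tfrac r2,f)$ — by applying the P\'olya lower bound now to $f\circ g$ to show that $f$ cannot grow too slowly relative to $g$, and controlling the $\limsup$ against the $\liminf$‑type behaviour of $M(\cdot,f)$ — is the delicate step on which I would concentrate the argument.
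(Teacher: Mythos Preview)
The paper does not prove this theorem at all; it is quoted from \cite{sharma} and used as a black box in the proof of Theorem~\ref{sec2,thm4}. There is therefore no in-paper argument to compare your proposal against.

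On its own merits, your proposal is a plan with the decisive step missing. The setup through P\'olya's lower bound and the trivial upper bound $M(r,f\circ g)\le M(M(r,g),f)$ is standard and correct, and your treatment of the polynomial cases is fine. But in the all-transcendental case you reduce everything to the inequality $\log M(\tfrac r2,f)\ge c\,r^{\rho_g+\epsilon}$ for \emph{all} large $r$, which is a \emph{lower-order} statement about $f$ that is not among the hypotheses, and then you stop, offering only the heuristic ``matching $\log\log M$ on both sides gives $\rho_f=\rho_g$'' and a promise to ``concentrate the argument'' there. That promised step is the entire content of the theorem in the nontrivial case. Your suggested route---apply P\'olya now to $f\circ g$---yields $M\bigl(\tfrac18 M(\tfrac r2,g)-C,\,f\bigr)\le M(r,f\circ g)=M(r,h\circ f)\le M(M(r,f),h)$, i.e.\ a \emph{lower} bound on the growth of $h$ in terms of that of $f$ and $g$; this inequality points the wrong way for bounding $\rho_h$ from above and does not by itself deliver the required lower-growth estimate on $f$. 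Nor does the heuristic equality $\rho_f=\rho_g$ help, since what you actually need is control of the \emph{lower} order of $f$ (your $\limsup$ involves $\log M(\tfrac r2,f)$ in the denominator for every $r$, not just along a favourable subsequence). Until this step is genuinely carried out---for instance by showing, via the semiconjugacy, that $f$ cannot have lower order strictly less than $\rho_g$, or by replacing the whole approach with a different device---the argument does not close.
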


\begin{theorem}\label{sec2,thm4}
Let $f, g$ and $h$ be entire functions satisfying $f\circ g=h\circ f.$  If $\rho_f$ and $\rho_g$ are finite  and $f'$ and $g'$ have finitely many critical values, then derivative of $f\circ g$ has bounded set of asymptotic values.
\end{theorem}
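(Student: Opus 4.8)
The plan is to differentiate $f\circ g$ and turn the problem into one about a \emph{product}, then invoke Lemma \ref{sec2,lem4}. We may assume $f$ and $g$ are transcendental (otherwise $f\circ g$ is a polynomial or degenerates harmlessly, and the asymptotic-value set of its derivative is trivially bounded). By the chain rule,
\[(f\circ g)'=(f'\circ g)\cdot g'.\]
Since differentiation preserves the order of an entire function, $f'$ and $g'$ are transcendental entire of finite order $\rho_{f'}=\rho_f$ and $\rho_{g'}=\rho_g$; in particular $f'\circ g$ and $g'$ are transcendental entire. By Lemma \ref{sec2,lem4} it therefore suffices to prove that $AV(f'\circ g)$ and $AV(g')$ are both bounded.

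For the factor $g'$ the classical Denjoy--Carleman--Ahlfors theorem says that an entire function of finite order has only finitely many finite asymptotic values; hence $AV(g')$ is finite, and the hypothesis that $g'$ has finitely many critical values upgrades this to $\text{Sing}((g')^{-1})$ bounded, i.e. $g'\in\mathcal B$. The same reasoning gives $AV(f')$ finite and $f'\in\mathcal B$. The genuine difficulty is the factor $f'\circ g$: its order may be infinite (take $f'=g=\exp$), so Denjoy--Carleman--Ahlfors cannot be applied to $f'\circ g$ itself, and this cluster-set step is the one I expect to be the main obstacle.

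To bound $AV(f'\circ g)$ by hand, let $w\in AV(f'\circ g)$ be attained along a curve $\Gamma\to\ity$, parametrised by $\gamma:[0,\ity)\to\C$ with $|\gamma(t)|\to\ity$, so that $f'(g(\gamma(t)))\to w$. Consider the cluster set $L\subset\ti\C$ of $g$ along $\Gamma$, namely the nested intersection over $T\ge0$ of the compact connected sets $\overline{g(\gamma([T,\ity)))}$; being a decreasing intersection of continua in the compact space $\ti\C$, $L$ is itself a continuum. For any finite $a\in L$, choosing $t_n\to\ity$ with $g(\gamma(t_n))\to a$ and using continuity of $f'$ forces $f'(a)=w$, so $L\cap\C\subset\{f'=w\}$, a discrete set. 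A continuum whose finite part is discrete must reduce to a single finite point or to $\{\ity\}$; thus either $g\to a\in AV(g)$ along $\Gamma$ with $w=f'(a)$, or $g\to\ity$ along $\Gamma$ with $w\in AV(f')$. Therefore
\[AV(f'\circ g)\subseteq AV(f')\cup f'\big(AV(g)\big),\]
which is finite since $AV(f')$ and $AV(g)$ are finite (Denjoy--Carleman--Ahlfors, as $\rho_f,\rho_g<\ity$). Hence $AV(f'\circ g)$ is bounded, and Lemma \ref{sec2,lem4} applied to $(f'\circ g)\cdot g'=(f\circ g)'$ shows that $AV\big((f\circ g)'\big)$ is bounded, as required. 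Note that this argument uses only the hypotheses on $f$ and $g$; the semiconjugacy $f\circ g=h\circ f$ and Theorem \ref{sec2,thm5} are not needed for the conclusion.
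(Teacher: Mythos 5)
Your decomposition is exactly the paper's: write $(f\circ g)'=(f'\circ g)\cdot g'$, reduce via Lemma \ref{sec2,lem4} to showing $AV(f'\circ g)$ and $AV(g')$ are bounded, and control $AV(f'\circ g)$ through the inclusion $AV(f'\circ g)\subseteq AV(f')\cup f'(AV(g))$, which you prove by a correct cluster-set argument (the paper simply cites \cite{berg5} for it). The genuine gap is the ingredient you use to make the individual sets finite. The Denjoy--Carleman--Ahlfors theorem does \emph{not} say that an entire function of finite order has finitely many asymptotic values; it says that the inverse of a function of finite order $\rho$ has at most $2\rho$ \emph{direct} transcendental singularities. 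Asymptotic values coming from \emph{indirect} singularities are not controlled by the order alone: an entire function of finite order can have infinitely many asymptotic values, provided it has infinitely many critical values. This is precisely the subject of Bergweiler--Eremenko \cite{berg6}, which the paper invokes: for functions of finite order, indirect singularities are limits of critical values, so finite order \emph{together with} finitely many critical values yields finitely many asymptotic values \cite[Corollary 3]{berg6}. Consequently, your assertion that the hypothesis on the critical values of $f'$ and $g'$ merely ``upgrades'' finiteness of $AV$ to $f',g'\in\mathcal{B}$ --- and your closing remark that this hypothesis is not needed --- is exactly backwards: it is the load-bearing hypothesis, without which the finiteness of $AV(f')$ and $AV(g')$ fails in general. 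Replacing each of your DCA appeals by \cite[Corollary 3]{berg6} (applied to $f'$ and $g'$, which have finite order since $\rho_{f'}=\rho_f$ and $\rho_{g'}=\rho_g$) turns those steps into the paper's proof.

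A further caveat, which your write-up shares with the paper rather than repairs: the term $f'(AV(g))$ in the inclusion requires $AV(g)$ --- the asymptotic values of $g$ itself, not of $g'$ --- to be bounded. You again justify this by the false form of DCA, but the theorem's hypotheses concern the critical values of $g'$, not of $g$, so \cite[Corollary 3]{berg6} cannot be applied to $g$ directly; the paper's proof is silent on this point as well. (Your observation that the semiconjugacy and Theorem \ref{sec2,thm5} are never actually used is fair --- the paper derives that $\rho_h$ is finite and then makes no use of it --- but that remark does not offset the gaps above.)
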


\begin{proof}
From Theorem \ref{sec2,thm5}, $\rho_h$ is finite. Also for any entire function $F, \rho_F=\rho_{F'},$ we obtain $\rho_f=\rho_{f'}, \rho_g=\rho_{g'}$ and $\rho_h=\rho_{h'}$ are all finite. Also $f'$ and $g'$ have finitely many critical values, therefore $f'$ and $g'$ have finite and hence  bounded set of asymptotic values, \cite[Corollary 3]{berg6}. Consider entire functions $f'(g(z))$ and $g'(z).$ As $AV(f\circ g)\subset AV(f)\cup f(AV(g))$ \cite{berg5}, we have $AV(f'(g(z)))$ is bounded and thus using Lemma \ref{sec2,lem4}, $(f\circ g)'(z)=f'(g(z))g'(z)$ has bounded set of asymptotic values and hence the result.
\end{proof}

Acknowledgement. I am thankful to Prof. A. P. Singh, Central University of Rajasthan for his helpful comments.

\end{document}